\def\<{\langle}
\def\>{\rangle}
\newtheorem{thmA}{Theorem}
\title{Profinite completions of free-by-free groups contain everything}
\author{Martin R. Bridson} 
\address{Mathematical Institute,  Andrew Wiles Building,
University of Oxford, OX2 6GG, UK}
\email{ bridson@maths.ox.ac.uk}
\def\-{\overline}
\def\G{\Gamma}
\def\ker{{\rm{ker}}\ }
\def\wh{\widehat}
 \def\Z{\mathbb{Z}}
\def\qed{ $\sqcup\!\!\!\!\sqcap$}
\def\G{\Gamma}
\def\<{\langle}
\def\>{\rangle}
\def\ilim{\varprojlim}
\def\out{{\rm{Out}}}
\def\G{\Gamma}
\def\wh{\widehat}
\newtheorem{theorem}{Theorem}[section]
\newtheorem{lemma}[theorem]{Lemma}
\newtheorem{proposition}[theorem]{Proposition}
\def\out3{{\rm{Out}}(F_3)}
\theoremstyle{definition} 
\newtheorem{remark}[theorem]{Remark}
\newtheorem*{theorem*}{Theorem}
\def\ilim{\varprojlim}
\begin{document}
  
\begin{abstract}   
Given an arbitrary, finitely presented, residually finite group $\G$, one can construct a finitely generated,
residually finite, 
free-by-free group $M_\Gamma = F_\infty\rtimes F_4$ and an embedding
$M_\Gamma  \hookrightarrow (F_4\ast \G)\times F_4$
that induces an isomorphism of profinite completions. In particular, there is a free-by-free
group whose profinite completion contains $\wh{\G}$ as a retract.
\end{abstract}


\subjclass{20E26, 20E18 (20F65, 20J06) }

\keywords{profinite completions, free-by-free groups}

\maketitle

%
%
%
%


The finite quotients of a  group $\G$ form a directed system
and  the  {\em profinite completion} of $\G$ is the inverse limit of this system,
$
\wh{\G}:= \ilim \G/N.$
 The natural map $\G\to\wh{\G}$ is injective if $\G$ is residually finite, and two finitely generated
groups $\G_1$ and $\G_2$ have the same set of finite images if and only if $\wh{\G}_1\cong \wh{\G}_2$.
The purpose of this note is to demonstrate that the profinite completions of finitely generated,
residually finite free-by-free groups 
contain, as retracts, the profinite completions of all subgroups of finitely presented groups.

\begin{thmA} \label{t:B}
Given an arbitrary, finitely generated, recursively presented  group $\G$
that is residually finite, one can construct a finitely generated, residually finite
free-by-free group $M_\Gamma = F_\infty\rtimes F_4$ and an embedding
$M_\Gamma  \hookrightarrow (F_4\ast \G)\times F_4$
that induces an isomorphism of profinite completions.
\end{thmA}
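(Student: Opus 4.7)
The plan is to construct $M_\Gamma$ as an asymmetric Mihailova-style fibre-product subgroup of $(F_4*\Gamma)\times F_4$ and to verify the required properties in three stages. Write $\pi\colon F_4*\Gamma\twoheadrightarrow\Gamma$ for the retraction that kills the free factor $F_4=\langle a_1,\ldots,a_4\rangle$, and let $K=\ker\pi$; by the Kurosh subgroup theorem, $K$ is a free group of countably infinite rank with canonical basis $\{\gamma a_i\gamma^{-1}:\gamma\in\Gamma,\ i=1,\ldots,4\}$, so that $F_4*\Gamma\cong K\rtimes\Gamma=F_\infty\rtimes\Gamma$. The aim is then to replace the acting quotient $\Gamma$ by a free group of rank four while preserving every finite image under an embedding into $(F_4*\Gamma)\times F_4$.

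The construction itself proceeds as follows. Fix a homomorphism $\tau\colon F_4\to F_4*\Gamma$ from a free group with basis $e_1,\ldots,e_4$ (thought of as the second factor of the ambient) together with a finite collection $w_1,\ldots,w_m\in K$, and let $M_\Gamma$ be the subgroup of $(F_4*\Gamma)\times F_4$ generated by the ``twisted diagonal'' pairs $(\tau(e_i),e_i)$ together with the ``kernel'' pairs $(w_\ell,1)$. The second projection restricts to a surjection $M_\Gamma\twoheadrightarrow F_4$ whose kernel is the subgroup of $K\times\{1\}$ generated by all $\tau(F_4)$-conjugates of the $w_\ell$; with $\tau$ and the $w_\ell$ chosen appropriately, this kernel is a free group of countably infinite rank, yielding the claimed decomposition $M_\Gamma\cong F_\infty\rtimes F_4$ with $4+m$ generators. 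Residual finiteness of $M_\Gamma$ is inherited from the ambient, which is residually finite by Gruenberg's theorem on free products of residually finite groups.

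The heart of the proof is to show that the inclusion $M_\Gamma\hookrightarrow(F_4*\Gamma)\times F_4$ induces an isomorphism of profinite completions. By standard criteria this splits into profinite density---every finite quotient of the ambient is realized on $M_\Gamma$---together with profinite closedness---every finite-index subgroup of $M_\Gamma$ arises as the intersection with a finite-index subgroup of the ambient. Density is the main obstacle: a naive four-generated subgroup of $\Gamma$ cannot surject onto arbitrarily complex finite quotients of $\Gamma$, so the images $\pi\circ\tau(e_i)$ alone are insufficient, and one must engineer $\tau$ and the $w_\ell$ so that the conjugation action of $\tau(F_4)$ on the $w_\ell$, when read off inside any finite quotient of the ambient, produces enough data to realize every finite image of $\Gamma$. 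Here the residual finiteness and recursive presentability of $\Gamma$ are both used essentially, the latter to make the choice of parameters effective. Closedness then follows from residual finiteness of $M_\Gamma$ via standard subgroup-separability style arguments.
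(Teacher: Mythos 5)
Your outline correctly identifies the ambient group $(F_4\ast \G)\times F_4$ and the rough shape of $M_\G$ (a finitely generated subgroup built from ``diagonal'' pairs over the second factor together with finitely many kernel pairs, projecting onto $F_4$ with free kernel of infinite rank), but it stops exactly where the proof has to start. The phrases ``with $\tau$ and the $w_\ell$ chosen appropriately'' and ``one must engineer $\tau$ and the $w_\ell$ so that \dots'' are the entire content of the theorem, and you give no indication of how such a choice could be made. The paper's mechanism is a fibre product over an explicit auxiliary group: take a $4$-generated, finitely presented, \emph{acyclic} group $U$ with no proper finite-index subgroups into which every finitely generated recursively presented group embeds (Theorem \ref{t:A}, resting on Higman's embedding theorem and Baumslag--Dyer--Heller), fix an epimorphism $\mu\colon F_4\onto U$ and an epimorphism $\Psi\colon F_4\ast\G\onto U$ restricting to an embedding of $\G$, and let $M_\G$ be the fibre product of $\Psi$ and $\mu$. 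Finite generation is Lemma \ref{ll} (using that $U$ is finitely presented), and the profinite statement is precisely the Platonov--Tavgen criterion (Proposition \ref{p:PT}), which needs both $\wh{U}=1$ and $H_2(U,\Z)=0$. Without some such $U$ --- hence without Higman's theorem --- there is no reason that a four-generated twisted diagonal plus finitely many kernel elements can be made profinitely dense: density means the image of $M_\G$ in every finite quotient of $(F_4\ast\G)\times F_4$ is the whole quotient, and in the paper this is delivered exactly by the fact that the common quotient $U$ of the defining maps has no nontrivial finite quotients.

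The second concrete error is the claim that ``closedness then follows from residual finiteness of $M_\G$ via standard subgroup-separability style arguments.'' Residual finiteness of $M_\G$ gives injectivity of $M_\G\to\wh{M}_\G$; it says nothing about injectivity of $\wh{M}_\G\to\wh{D(\G)}$. For the latter one must show that every finite-index subgroup of $M_\G$ contains $M_\G\cap L$ for some finite-index $L\le (F_4\ast\G)\times F_4$, and this is exactly the step of the Platonov--Tavgen argument that consumes the finite presentability and acyclicity (specifically $H_2(U,\Z)=0$) of the common quotient; it fails for fibre products in general. So your proposal omits both essential inputs: the universal finitely presented acyclic group with no finite quotients, and Proposition \ref{p:PT}. (The easy parts are fine: residual finiteness of the ambient via Gruenberg, and freeness of the kernel of the projection to $F_4$ --- which in the paper comes from the Kurosh subgroup theorem because $\Psi$ is injective on conjugates of $\G$ --- though even the $F_\infty\rtimes F_4$ structure is asserted rather than derived in your setup.)
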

Note that $D(\G):=(F_4\ast \G)\times F_4$ is residually finite
and the obvious retraction $D(\G)\to\G$ induces a retraction  $\wh{M}_\Gamma\cong\wh{D(\G)}\to\wh{\G}$.

It follows from this theorem
that  the cohomological dimension of the profinite completion of a  finitely generated, residually finite group of
cohomological dimension $2$ can be any positive integer, or can be infinite (Section \ref{s:cohom}).  
 And, despite
being torsion-free itself, a free-by-free group can have all manner of torsion in its profinite completion (Section \ref{s:torsion}). The theorem also tells us
that, with the possible (but unlikely) exception of certain free-by-free groups $H$, no statement of the following
form can be valid for all pairs of finitely generated, residually finite groups $\G_1$ and $\G_2$:
 ``{\em{if $\wh{\G}_1\cong\wh{\G}_2$ and $\G_1$ has a subgroup isomorphic to $H$, then
$\G_2$ has a subgroup isomorphic to $H$.}}"
Furthermore, the theorem tells that if a property
$\mathcal P$ is common to the subgroups of free-by-free groups but not to the subgroups of all
finitely presented, residually finite groups, then $\mathcal P$ is {\em not}  a profinite invariant. Such
properties include: being torsion-free; being locally indicable (i.e.~every finitely generated subgroup maps onto $\Z$); 
being left-orderable;
all 2-generator subgroups being finitely presented (or coherent); 
 all solvable (or amenable, or nilpotent) subgroups being finitely generated and abelian (of rank at most $2$).

We shall see that Theorem \ref{t:B} is a  rather direct consequence of the following result,   which is
proved in \cite{mb-gilb} using celebrated theorems of Higman \cite{higman}
and Baumslag, Dyer and Heller \cite{BDH}.
A group $G$ is termed {\em acyclic} if $H_i(G,\Z)=0$ for all $i\ge 1$. 

\begin{thmA}\cite{mb-gilb} \label{t:A}
There is a finitely presented acyclic group $U$ such that:
\begin{enumerate}
\item $U$ has no proper subgroups of finite index;
\item every finitely generated, recursively presented group can be embedded in $U$.
\end{enumerate}
\end{thmA}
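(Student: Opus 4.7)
The plan is to build $U$ by chaining three classical constructions: a universal finitely generated, recursively presented group, Higman's embedding theorem, and an acyclic embedding in the style of Baumslag--Dyer--Heller.

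The first step is to produce a single finitely generated, recursively presented group $U_0$ that contains an isomorphic copy of every finitely generated, recursively presented group. Such a $U_0$ exists because the class of finitely generated, recursively presented groups can be effectively enumerated, and one can amalgamate all members of this enumeration into a single two-generator recursively presented group by iterated HNN extensions whose stable letters are chosen recursively. Applying Higman's embedding theorem to $U_0$ yields a finitely presented group $H \supseteq U_0$, so $H$ is finitely presented and already contains every finitely generated, recursively presented group.

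The second step is to embed $H$ into a finitely presented acyclic group via a suitable refinement of the Baumslag--Dyer--Heller functor $G\mapsto A(G)$. The BDH construction adjoins to $G$ stable letters and relations that express every cycle in the bar resolution as a boundary, producing an acyclic group containing $G$; for a finitely presented input, one arranges to adjoin only finitely many generators and relators, obtaining a finitely presented acyclic group $U$ that still contains $H$ and hence every finitely generated, recursively presented group.

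The main obstacle is condition~(1), that $U$ has no proper subgroups of finite index. Acyclicity by itself only gives $H_1(U;\Z)=0$, which kills abelian finite quotients; to kill \emph{all} finite quotients a stronger structural property is needed, such as simplicity, or the mitotic/binate structure used in Berrick-style refinements of BDH, in which every finitely generated subgroup $K \leq U$ admits an element $t \in U$ conjugating it to an isomorphic copy lying in the derived subgroup of a larger finitely generated subgroup, with iteration forcing any image in a finite quotient to lie in arbitrarily deep terms of the derived series. The technical heart of the proof in \cite{mb-gilb} is to perform such an acyclic embedding in a way that simultaneously guarantees no proper finite-index subgroups \emph{and} preserves finite presentation, and this is what I expect to be the hardest step.
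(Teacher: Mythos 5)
Your steps 1--2 essentially reconstruct Higman's universal finitely presented group (enumerate the finitely generated recursively presented groups, embed them all, apply Higman \cite{higman}), which is exactly where the proof in \cite{mb-gilb} starts, and your appeal to Baumslag--Dyer--Heller \cite{BDH} for a finitely presented acyclic embedding is also in the spirit of that proof. The genuine gap is the one you flag yourself: condition (1) is never established, and the two mechanisms you gesture at cannot be made to work as stated. Simplicity is actually impossible here: by the Boone--Higman theorem, every finitely generated subgroup of a simple subgroup of a finitely presented group has solvable word problem, so a finitely presented simple $U$ could not contain the finitely generated, recursively presented groups with unsolvable word problem that condition (2) requires it to contain. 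The mitotic/binate route founders on finite presentability: even granting that such a structure rules out finite quotients, no nontrivial finitely presented mitotic or binate group is known (this is a well-known open problem), and your sketch offers no way to produce one containing $H$. So the proposal stops precisely at the step that carries the content of the theorem.

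The argument of \cite{mb-gilb} (summarised in the proof of the Lemma in this paper) avoids imposing any such global structural property on $U$. It takes Higman's universal group $U_0$, forms a particular HNN extension $U^\dagger$ of $U_0$ whose stable letter $\tau$ is engineered so that any quotient killing $\tau$ kills all of $U_0$, with the homology of $U^\dagger$ tailored for the next step; it then sets $U=U^\dagger\ast_\Z B$, amalgamating over $\langle\tau\rangle\cong\Z$, where $B$ is a finitely presented acyclic group with no nontrivial finite quotients and an element of infinite order identified with $\tau$ (the Ol'shanskii--Sapir $2$-generator, $2$-relator group \cite{OS}). In any finite quotient of $U$ the image of $B$ is trivial, hence $\tau$ dies, hence so does $U_0$ and with it all of $U$; acyclicity follows from a Mayer--Vietoris computation for the amalgam, and condition (2) is inherited from the subgroup $U_0\le U$. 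The idea missing from your proposal is this localisation: the ``no finite quotients'' requirement is carried by a small, explicitly known auxiliary group $B$ glued along the stable letter, rather than by a structural property (simplicity, binateness) of the universal acyclic group itself.
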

 
\section{The Construction}

The {\em fibre product} of a pair of epimorphisms $p_i:G_i\twoheadrightarrow Q \ (i=1,2)$ is the subgroup
$P=\{(g_1,g_2)\mid p_1(g_1)=p_2(g_2)\} < G_1\times G_2$.
We need the following well-known lemma.

\begin{lemma}\label{ll} If $G_1$ and $G_2$ are finitely generated and $Q$ is finitely
presented, then $P$ is finitely generated.
\end{lemma}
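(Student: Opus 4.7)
The plan is to exhibit an explicit finite generating set for $P$, built from lifts of the generators of $G_1$ and $G_2$ together with lifts of the relators of a finite presentation of $Q$. The key structural fact is the short exact sequence
\[
1 \to K_1 \times K_2 \to P \to Q \to 1,
\]
where $K_i = \ker p_i$ and the map $P \to Q$ sends $(g_1,g_2)$ to the common value $p_1(g_1) = p_2(g_2)$. Since $Q$ is finitely generated, to finitely generate $P$ it suffices to lift a finite generating set of $Q$ and to finitely generate the normal subgroup $K_1 \times K_2$ of $P$.

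First I would fix finite generating sets $\{s_1,\dots,s_m\}$ for $G_1$ and $\{t_1,\dots,t_n\}$ for $G_2$, and, using surjectivity of $p_1$ and $p_2$, choose lifts $\tilde s_i \in G_2$ with $p_2(\tilde s_i)=p_1(s_i)$ and $\hat t_j \in G_1$ with $p_1(\hat t_j)=p_2(t_j)$. This yields elements $(s_i,\tilde s_i)$ and $(\hat t_j, t_j)$ of $P$. Next, I would invoke the finite presentability of $Q$: with respect to the generators $p_2(t_j)$, the group $Q$ has a finite presentation with relators $R_1,\dots,R_s$, and the elements $r_k := R_k(t_1,\dots,t_n) \in G_2$ normally generate $K_2$ in $G_2$. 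Thus $(1, r_k) \in P$ for each $k$.

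The claim is that $P$ is generated by the finite set
\[
\Sigma = \{(s_i,\tilde s_i)\}_{i=1}^m \ \cup\ \{(\hat t_j, t_j)\}_{j=1}^n\ \cup\ \{(1,r_k)\}_{k=1}^s.
\]
To verify this, take $(g_1,g_2) \in P$ and write $g_1$ as a word $w(s_1,\dots,s_m)$. Setting $\tilde g_1 := w(\tilde s_1,\dots,\tilde s_m) \in G_2$, the element $w((s_1,\tilde s_1),\dots,(s_m,\tilde s_m)) = (g_1,\tilde g_1)$ lies in $\langle \Sigma\rangle$, and then $(g_1,g_2) = (g_1,\tilde g_1)\cdot(1,\tilde g_1^{-1}g_2)$. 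Since $p_2(\tilde g_1) = p_1(g_1) = p_2(g_2)$, the second factor lies in $\{1\}\times K_2$, and it remains only to show that $\{1\}\times K_2 \subset \langle\Sigma\rangle$.

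For the last step, an arbitrary element of $K_2$ is a product of $G_2$-conjugates of the $r_k^{\pm 1}$. Writing any conjugating element $u\in G_2$ as a word $v(t_1,\dots,t_n)$ and forming $v((\hat t_1,t_1),\dots,(\hat t_n,t_n)) = (\hat u, u) \in P$, conjugation gives
\[
(\hat u,u)\,(1,r_k)\,(\hat u,u)^{-1} = (1,\,u r_k u^{-1}) \in \langle \Sigma\rangle,
\]
so all of $\{1\}\times K_2$ is captured. The only step where the hypotheses are actually used is the normal generation of $K_2$ by finitely many elements, which is precisely the content of finite presentability of $Q$; this is where I would expect any subtlety to lie, but it is standard once one identifies the correct presentation.
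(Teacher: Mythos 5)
Your proof is correct and follows essentially the same route as the paper: lift the finite generating sets of $G_1$ and $G_2$ into $P$ and add finitely many elements of the form $(1,r_k)$ where the $r_k$ normally generate the kernel (the paper works with $\ker p_1$ rather than $\ker p_2$, an immaterial choice), with finite presentability of $Q$ used exactly where you use it. The only difference is that you spell out the verification that this set generates $P$, which the paper leaves as ``easy to check.''
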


\begin{proof} For $i=1,2$, let $S_i\subset G_i$ be a finite set that generates $G_i$.
For each $s\in S_1$ choose  $u_s\in G_2$ so that $p_1(s) = p_2(u_s)$. Similarly, 
for each $t\in S_2$ choose $v_t\in G_1$ so that $p_2(t)=p_1(v_t)$.
As $G_1$ is finitely generated
and $Q$ is finitely presented, there is a finite set $R\subset G_1$ 
whose conjugates  generate $\ker p_1$. It is easy to check
 $P$ is generated by $
\{ (s, u_s),\ (v_t,t),\ (r, 1) \mid r\in R,\ s\in S_1,\ t\in S_2\}.$
\end{proof}

The following proposition originates in the work of Platonov and Tavgen \cite{PT}. They only considered the
case $p_1=p_2$ but the adaptation to the asymmetric case is straightforward  \cite{mb-jems}.

\begin{proposition}\label{p:PT} For $i=1,2$, let $p_i:G_i\twoheadrightarrow Q$ be an epimorphism of groups.
If $G_1$ and $G_2$ are finitely generated and $Q$ is finitely presented, with $\wh{Q}=1$ and $H_2(Q,\Z)=0$,
then the inclusion of the fibre product $P\hookrightarrow G_1\times G_2$ induces an isomorphism
of profinite completions $\wh{P}\cong \wh{G}_1\times\wh{G}_2$.
\end{proposition}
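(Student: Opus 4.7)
The plan is to establish that the continuous homomorphism $\iota:\wh{P}\to\wh{G_1}\times\wh{G_2}$ induced by the inclusion $P\hookrightarrow G_1\times G_2$ is an isomorphism, by verifying surjectivity and injectivity separately. That $\iota$ is well-defined uses $P$ finitely generated (Lemma~\ref{ll}) and $\wh{G_1\times G_2}=\wh{G_1}\times\wh{G_2}$. Surjectivity needs only the hypothesis $\wh{Q}=1$; injectivity additionally invokes $H_2(Q,\Z)=0$.

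\emph{Surjectivity.} Take any surjection $\psi:G_1\times G_2\twoheadrightarrow F$ with $F$ finite; the aim is to show $\psi(P)=F$. Decompose $\psi$ factor-wise as $\psi_1(g)=\psi(g,1)$ and $\psi_2(g)=\psi(1,g)$, and set $H_i=\psi_i(G_i)$, $L_i=\psi_i(K_i)$ where $K_i:=\ker p_i$. Since $G_1\times 1$ commutes with $1\times G_2$, $H_1$ commutes with $H_2$, so $F=H_1H_2$ and each $L_i$ is normal in $F$. Let $N:=L_1L_2\triangleleft F$. The composite $G_i\to F/N$ kills $K_i$ and hence factors through $Q$; since $F/N$ is finite and $\wh{Q}=1$, this factored map is trivial and $H_i\subseteq N$. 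Hence $F=L_1L_2$. Because $(k,1)$ and $(1,k')$ lie in $P$ whenever $k\in K_1$ and $k'\in K_2$, we conclude $\psi(P)\supseteq L_1L_2=F$.

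\emph{Injectivity.} It suffices to show every surjection $\phi:P\twoheadrightarrow F$ onto a finite group extends along the inclusion to a homomorphism $\tilde\phi:G_1\times G_2\to F$. Applying the surjectivity argument to the short exact sequence $1\to K_1\times K_2\to P\to Q\to 1$ shows $\phi(K_1\times K_2)=F$, so $\phi$ is recovered from two restrictions $\phi_i:K_i\to F$ with commuting images. Building $\tilde\phi$ therefore amounts to lifting each $\phi_i$ to a homomorphism $\tilde\phi_i:G_i\to F$ so that $\tilde\phi_1,\tilde\phi_2$ still have commuting images. The obstruction to such a lift is a cohomology class on $Q$ controlled by $H^2(Q;A)$ for finite abelian modules $A$. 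The universal coefficient theorem together with $H_2(Q,\Z)=0$ and $H_1(Q,\Z)=Q^{\ab}=0$ (the latter from $\wh{Q}=1$ and finite presentability of $Q$, since a finitely generated abelian group with no finite quotient is trivial) gives $H^2(Q;A)=0$, so the obstruction vanishes at each stage of a composition series of $F$, yielding the desired extension.

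\emph{Main obstacle.} The technically involved step is the lifting argument in the injectivity part: formulating the obstruction correctly as a cohomology class, checking it lies in a group killed by the hypotheses, and inducting along a composition series of $F$ to handle non-abelian finite images. This is the essence of the Platonov--Tavgen mechanism of~\cite{PT}; the asymmetric variant (allowing distinct $G_1,G_2,p_1,p_2$) is carried out in~\cite{mb-jems} and can be invoked essentially verbatim.
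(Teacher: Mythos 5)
Your surjectivity half is the standard Platonov--Tavgen argument and is complete and correct; note, for context, that the paper itself does not prove this proposition but quotes it from \cite{PT} and \cite{mb-jems}, so what you are really attempting is a reconstruction of the cited proof. Your reduction of injectivity to the statement that every finite quotient $\phi:P\twoheadrightarrow F$ extends to a homomorphism $G_1\times G_2\to F$ is legitimate (such an extension does exist, and it immediately gives $\ker\tilde\phi\cap P\subseteq\ker\phi$), and your observation that $\phi(K_1\times K_2)=F$ is the right starting point.

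The gap is in the lifting step, which is where all the real content lies. First, to formulate any obstruction ``on $Q$'' you need $M_i:=\{k\in K_i:\ \phi_i(k)=1\}$ to be normal in $G_i$, not merely in $K_i$; this is true, but only because of the fibre-product structure (every $g_1\in G_1$ occurs as the first coordinate of an element of $P$, since $p_2$ is onto, so $G_i$-conjugation on $K_i$ is implemented by $P$-conjugation, which preserves $\ker\phi$), and you never establish it --- without it there is no extension $1\to K_i/M_i\to G_i/M_i\to Q\to 1$ to attach a class to. Second, your vanishing claim $H^2(Q;A)=0$ via universal coefficients is valid only for finite abelian $A$ with \emph{trivial} $Q$-action; in an induction along a composition series of $F$ the successive coefficient modules carry a possibly nontrivial action, and $H_1(Q,\Z)=H_2(Q,\Z)=0$ does not kill twisted $H^2$. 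The actual mechanism is different: with $A_i:=K_i/M_i$ finite and normal in $G_i/M_i$, one passes to the centralizer of $A_i$, a finite-index subgroup which still surjects onto $Q$ precisely because $\wh{Q}=1$ (a second, essential use of that hypothesis, absent from your sketch); this produces a central extension of $Q$ by a finite abelian group, which splits by UCT, and hence a homomorphism $\tilde\phi_i:G_i\to F$ extending $\phi_i$ with image $\phi_i(K_i)$. Only then do the two images commute for free, and the assembled map agrees with $\phi$ on all of $P$ because the discrepancy is a homomorphism $P\to Z(F)$ vanishing on $K_1\times K_2$, hence factoring through $Q$, hence trivial --- a point you assert (``$\phi$ is recovered from the two restrictions'') but do not prove. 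So the architecture of your argument is right, but the central step as you describe it (composition series plus UCT) would not go through as stated and needs to be replaced by the centralizer/central-extension argument.
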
 

We shall need the following refinement of Theorem \ref{t:A}.
I do not contend that there
is any real significance to the integer $4$ in this statement (and Theorem \ref{t:B}); 
with sufficient effort one might well be able to construct a 2-generator group $U$ with the desired properties. 

\begin{lemma}
There is a 4-generator group $U$ with the properties described in Theorem \ref{t:A}.
\end{lemma}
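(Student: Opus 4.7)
Let $V$ be the finitely presented acyclic group with no proper finite-index subgroups given by Theorem~\ref{t:A}, with a generating set $x_1,\dots,x_n$. The plan is to embed $V$ into a 4-generator group $U$ preserving all the properties of Theorem~\ref{t:A}, via a two-stage construction: (i) a Higman--Neumann--Neumann-style HNN reduction of the generator count, followed by (ii) a corrective amalgamation that restores acyclicity and kills the finite quotients introduced in stage~(i).

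For (i), property~(2) of Theorem~\ref{t:A} supplies a free subgroup $\langle a,b\rangle\cong F_2\le V$, inside which the elements $a^iba^{-i}$ ($i=1,\dots,n$) freely generate a rank-$n$ subgroup.  A sequence of HNN extensions of $V$ with two stable letters $s,t$ can be arranged so that $s$ conjugates a suitably chosen free subgroup of $V$ (again furnished by property~(2) applied to a free group of large rank) onto the subgroup generated by the $a^iba^{-i}$, and then $t$ conjugates $x_i$ onto the corresponding $s$-image.  The validity of each HNN extension is checked via Britton's lemma, using the freeness of the amalgamated subgroups.  After this stage, each $x_i$ is expressible as a word in $\{a,b,s,t\}$, so the resulting finitely presented group $\Delta$ containing $V$ is generated by only $4$ elements.

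For (ii), the HNN steps introduce new classes $[s],[t]\in H_1(\Delta)$ together with contributions to $H_2(\Delta)$ coming from $H_1$ of the amalgamated free subgroups, and $\Delta$ now has abelian (hence finite) quotients.  To cure this, form the amalgamated free product $U:=\Delta*_\Lambda W$ where $W$ is a further copy of the group from Theorem~\ref{t:A} and $\Lambda\cong F_2$ is a free subgroup of $\Delta$ containing $s,t$, embedded in $W$ using property~(2).  A Mayer--Vietoris computation shows that once the inclusion-induced map $H_1(\Lambda)\to H_1(\Delta)$ is surjective and $W$ is acyclic, the $H_1$-classes introduced in stage~(i) are killed in $U$; an extra (bounded) round of amalgamation handles the higher-degree classes in the same manner.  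Any finite quotient of $U$ must restrict trivially to both $\Delta$ and $W$ (since neither has finite quotients after the homology is killed) and hence is trivial.  Choosing generators of $W$ from inside $\Lambda$ ensures no new generators appear, so $U=\langle a,b,s,t\rangle$.

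The main obstacle is the coordinated homological bookkeeping in stage~(ii): the subgroup $\Lambda$ must simultaneously contain the stable letters, carry enough $H_1$ to surject onto $H_1(\Delta)$ (so $\Lambda$ cannot itself be acyclic), and yet embed in the acyclic $W$, and similar conditions must be arranged to kill higher homology from the HNN amalgamated subgroups.  Property~(2) of Theorem~\ref{t:A} is invoked repeatedly to produce both the free subgroups needed for the HNN extensions of stage~(i) and the acyclic envelope $W$ (containing any prescribed free $\Lambda$) used in stage~(ii).
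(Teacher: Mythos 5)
Your plan and the paper's proof diverge completely: the paper does not construct a new group at all, but simply inspects the construction of $U$ already given in \cite{mb-gilb}. There $U=U^\dagger\ast_\Z B$, where $U^\dagger$ is an HNN extension of Higman's universal group $U_0$ (which is $2$-generated) and the stable letter of that HNN extension is identified in the amalgam with an infinite-order element $\tau$ of $B$; hence $U$ is generated by $U_0\cup B$, and choosing $B$ to be the $2$-generator Ol'shanskii--Sapir group \cite{OS} with no non-trivial finite quotients gives a $4$-generator $U$ with no further work. Your proposal instead takes an $n$-generator group with the properties of Theorem \ref{t:A} and tries to re-embed it in a $4$-generator group while preserving acyclicity and the absence of finite quotients --- i.e.\ it attempts to redo, ad hoc, the acyclic-embedding machinery of Baumslag--Dyer--Heller \cite{BDH}, which is exactly the hard ingredient the cited construction already packages.

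And as written the plan fails at a specific point: stage (ii) cannot restore acyclicity. Each HNN extension in stage (i) over a finitely generated free associated subgroup $A$ produces, by Mayer--Vietoris (using $H_1(V)=H_2(V)=0$), a summand $H_2(\Delta)\cong H_1(A)\neq 0$. In stage (ii) you amalgamate along a \emph{free} group $\Lambda$, and for $U=\Delta\ast_\Lambda W$ the Mayer--Vietoris sequence gives exactness of $H_2(\Lambda)\to H_2(\Delta)\oplus H_2(W)\to H_2(U)$; since $H_2(\Lambda)=0$, the group $H_2(\Delta)\oplus H_2(W)$ injects into $H_2(U)$, so $H_2(U)\neq 0$ no matter how $\Lambda$ and $W$ are chosen. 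Amalgamating along free subgroups can kill $H_1$ but can never kill $H_2$; to kill $H_2$ the amalgamated subgroup must itself carry the $H_2$-classes (hence be non-free) while its $H_1$ injects, and that in turn creates $H_3$-classes, and so on --- controlling this regress while keeping finite presentability, triviality of finite quotients and a bounded generating set is precisely the content of the BDH-type theorems, so the sentence ``an extra (bounded) round of amalgamation handles the higher-degree classes'' is not a proof. Note that $H_2(U,\Z)=0$ is exactly what Proposition \ref{p:PT} requires, so this is not a cosmetic defect. Two smaller points: $\{x_1,\dots,x_n\}$ need not generate a free subgroup of $V$, so ``$t$ conjugates $x_i$ onto the corresponding $s$-image'' is not automatically a legitimate HNN extension (the Higman--Neumann--Neumann trick conjugates free subgroups generated by elements such as $x_i a^{-i}b a^{i}$); and the finite-quotient argument should proceed by observing that a finite quotient kills $W$, hence $s,t\in\Lambda\le W$, hence $V$, and therefore all of $U$ --- not via the assertion that $\Delta$ has no finite quotients, which, as you yourself note, is false.
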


\begin{proof} Theorem \ref{t:A} is proved on pages 20-21 of \cite{mb-gilb}. The construction of $U$ begins with
Higman's universal group $U_0$, which can be generated by $2$ elements.
A particular HNN extension $U^\dagger$ of $U_0$ is constructed and $U$ is an amalgamated free product $U^\dagger\ast_\Z B$
where $B$ is any
 finitely presented acyclic group  that has an element of infinite order $\tau$ but no non-trivial finite quotients.
The amalgamation identifies $\tau$ with the stable letter of the HNN extension $U^\dagger$, so 
$U$ is generated by $U_0$ and $B$.
We take $B$ to be the 2-generator group constructed in \cite{OS}.
\end{proof}

\noindent{\bf Proof of Theorem \ref{t:B}.} Let $U$ be a 4-generator group that
satisfies Theorem \ref{t:A}.  We fix an epimorphism $\mu: F_4\to U$.
Given a finitely generated, recursively presented group $\G$, we fix an embedding $\psi:\G\hookrightarrow U$
and extend this to an epimorphism $\Psi: F_4\ast \G\to U$ that restricts to $\mu$ on $F_4$ and $\psi$ on $\G$.
Consider the fibre product of $\Psi$ and $\mu$,
$$
P < (F_4\ast \G) \times F_4.
$$
Lemma \ref{ll} assures us that $P$ is finitely generated and Proposition \ref{p:PT} tells us that the
inclusion $P\hookrightarrow (F_4\ast \G) \times F_4$  induces an isomorphism of profinite completions. 

The  restriction of $\Psi$ to each conjugate of $\G$ is injective, so by the Kurosh subgroup theorem
$\ker\Psi$ is free. The projection from $P$ to the second factor of $(F_4\ast\G)\times F_4$ is onto and
has kernel $\ker\Psi$. Thus $P$ is free-by-free; more precisely, it is of the from $F_\infty\rtimes F_4$.
Define $M_\G=P$.
\qed
\begin{remark} $M_\G$ is not finitely presented.
\end{remark}
 
\section{Cohomological dimension}\label{s:cohom}

The fibre products $M_\G$ that we are considering are free-by-free (and not free) and hence have cohomological 
dimension $2$. But if $\G$ has cohomological dimension $d$ then $D(\G):=(F_r\ast \G)\times F_4$ 
has cohomological dimension $d+1$. Thus Theorem \ref{t:B} yields pairs of finitely generated, residually finite
groups that have the same profinite completion but have an arbitrary 
difference in their cohomological dimensions; for example we can take $\G\cong\Z^d$.
Moreover, $D(\Z^d)$ is good in the sense of Serre \cite{serre} and it retracts onto $\Z^{d+1}$,
so  $\wh{D(\Z^d)}\cong \wh{M}_\G$ also has cohomological dimension $d+1$.
Thus Theorem \ref{t:B} provides us with examples of groups of cohomological dimension $2$
whose profinite completions have cohomological dimension $d+1$, where $d$ is arbitrary.
One can also arrange for $ \wh{M}_\G$ to have infinite cohomological dimension (even if it
is torsion free).

\section{Torsion} \label{s:torsion}
In  \cite{lub},  Lubotzky
used the congruence subgroup property to exhibit ``as much torsion as one can wish" in the profinite
completion of certain torsion-free arithmetic groups. 
Theorem \ref{t:B} shows that torsion is similarly unconstrained in the profinite completions of free-by-free
groups, since $\wh{M}_\G\cong\wh{D(\G)}$ retracts
onto $\wh{\G}$. In this case, one can encode the torsion into $\G$ directly.

\bigskip

\noindent{\bf Acknowledgements.} I thank my longtime collaborator Alan Reid for numerous fruitful conversations about 
the structure of profinite completions, and I thank him and Khanh Le for a correspondence about the profinite invariance of local indicability that motivated me to revive a mooted sequel to  \cite{mb-gilb}. I thank  Andrei Jaikin-Zapirain for leading the organisation of the stimulating {\em Workshop on Profinite Rigidity} within the Agol Lab at ICMAT in June 2023, 
where the profinite invariance of orderability was discussed and
where I presented these results. I also thank ICMAT and its Director, Javier Aramayona, for the warmth of their hospitality.

\end{document}